\newtheorem{theorem}{Theorem}[section]
\newtheorem{lemma}{Lemma}[section]
\nonstopmode \numberwithin{equation}{section}
\begin{document}

\title{The Modified Lommel functions: monotonic pattern and inequalities}

\author[Saiful R. Mondal]{Saiful R. Mondal}

\address{Department of Mathematics and Statistics, Collage of Science,
King Faisal University, Al-Hasa 31982, Saudi Arabia}
\email{smondal@kfu.edu.sa}

\keywords{Lommel functions;  Modified Lommel functions; Tur\'an-type inequality ; monotonicity properties;  log-convexity
}

\subjclass[2010]{33C10;  26D7;  26D15}

\begin{abstract}
This article studies the monotonicity, log-convexity of the modified Lommel functions
by using its power series  and infinite product representation. Same properties for the ratio of the modified Lommel functions with the Lommel function, $\sinh$ and $\cosh$ are also discussed. As consequence  some Tur\'an type and reverse Tur\'an type inequalities are given. A Rayleigh type function for the
Lommel functions is derived and as an application  we obtain the Redheffer-type inequality.
\end{abstract}

\maketitle


\section{Introduction}
The Lommel functions \cite{Lommel-1, Lommel-2}  are the particular solution of the inhomogeneous Bessel differential equations
\begin{align}\label{eqn;Lommel-dif}
x^2 \mathtt{f}_{\mu, \nu}''(x)+x \mathtt{f}_{\mu, \nu}'(x)-(\nu^2-x^2)\mathtt{f}_{\mu, \nu}(x)=x^{\mu+1},
\end{align}
which are usually denoted as $s_{\mu, \nu}$ and $S_{\mu, \nu}$ \cite{W, AB} and $\mathbf{S}_{\mu, \nu}$ \cite{Szym} given by
\begin{align}
\mathtt{f}_{\mu, \nu}(x):=
\left\{
\begin{array}{llll}
\mathtt{S}_{\mu, \nu}(x)&=\frac{x^{\mu+1}}{(\mu+1)^2-\nu^2} {}_1F_{2}\left(1; \tfrac{\mu-\nu+3}{2},  \tfrac{\mu+\nu+3}{2}; -\tfrac{x^2}{4}\right)\\
\mathbb{S}_{\mu, \nu}(x)&=\mathtt{S}_{\mu, \nu}(x) + \frac{(2i)^{\mu-1}}{i^{\nu}} \Gamma(\tfrac{\mu-\nu+1}{2})\Gamma(\tfrac{\mu+\nu+1}{2})J_{\nu}(x)\\
{S}_{\mu, \nu}(x)&=\mathbb{S}_{\mu, \nu}(x) + i 2^{\mu-1} \cos\left(\tfrac{\pi(\mu-\nu)}{2}\right) \Gamma(\tfrac{\mu-\nu+1}{2})\Gamma(\tfrac{\mu+\nu+1}{2})H_{\nu}^{(1)}(x),
\end{array}\right.
\end{align}
where $J_{\nu}$ and  $H^{(1)}_{\nu}$ are respectively the Bessel and the Hankel function of the first kind. The above functions satisfy the recurrence relation
\begin{align}\label{rec-L}
\mathtt{f}_{\mu+2, \nu}(x)=x^{\mu+1}- \left((\mu+1)^2-\nu^2\right) \mathtt{f}_{\mu, \nu}(x)
\end{align}

The application of the Lommel functions can be seen in various branches of mathematics and mathematical physics. The mathematical properties of the Lommel functions are available in the literature \cite{RM, Glasser, Cooke,Szym,Steinig,Pidduck,CY,KM}.
Like the modfied Bessel functions, the analogous  of the Lommel functions is the modified Lommel functions. This functions first appear in the theory of screw propeller \cite{Goldstein} and later analysed in \cite{Olver,Rollinger,Szym}.
The modified Lommel function $\mathtt{g}_{\mu,\nu}$ is a particular solution of the
differential equation
\begin{align}\label{eqn5}
 x^2 y''(x)+x y'(x)-(x^2+\nu^2)y(x)=x^{\mu+1},
 \end{align}
and satisfies the relation $\mathtt{g}_{\mu,\nu}(x)=i^{-(\mu+1)}\mathtt{f}_{\mu,\nu}(ix)$. Clearly, $\mathtt{g}_{\mu,\nu}$ satisfies the recurrence relation
\begin{align}\label{rec-mL}
\mathtt{g}_{\mu+2, \nu}(x)=\left((\mu+1)^2-\nu^2\right) \mathtt{g}_{\mu, \nu}(x)-x^{\mu+1}.
\end{align}

For $k \in \{0, 1, 2, \ldots\}$, consider the function
\begin{align}\label{eqn-1}
\varphi_k(x):= {}_1F_{2}\left(1; \tfrac{\mu-k+2}{2},  \tfrac{\mu-k+3}{2}; -\tfrac{x^2}{4}\right),
\end{align}
where $x \in \mathbb{R}$ and $\mu \in \mathbb{R}$ such that $\mu-k$ is not in $\{ 0, -1, -2, \ldots\}$. In \cite{Baricz-Kumandos}, it is shown that $\varphi_k$ is an even real entire function of order one and poses the Hadamard factorization
\begin{align}\label{eqn2}
\varphi_k(x)= \prod_{j=1}^\infty \left(1-\frac{x^2}{\eta_{\mu, k, n}^2}\right),
\end{align}
where $\pm \eta_{\mu, k, n}$ are all zeroes of  $\varphi_k$. The infinite product in \eqref{eqn2} is absolutely convergent.
%
%
The function $\varphi_k$ have close association with the  Lommel function $\mathtt{S}_{\mu,\nu}$ as
\begin{align}\label{lommel-1} \mathtt{S}_{\mu-k-1/2,1/2}(x)= \frac{x^{\mu-k+1/2}}{(\mu-k)(\mu-k+1)}\varphi_k(x).\end{align}
For $\mu \in (0,1)$, it is shown in \cite{KM} that $\mathtt{S}_{\mu-1/2,1/2}$ has only one zero in each of the interval
\[ I_{2n-1}(\mu)= \left( \left(2n-1+\frac{\mu}{2}\right)\pi, \left(2n-1+{\mu}\right)\pi\right), \quad \text{and} \quad
I_{2n}(\mu)= \left( 2n\pi, \left(2n+\frac{\mu}{2}\right)\pi\right).\]

In this article we consider the function $\mathtt{L}_{\mu,\nu}$ as
\begin{align}\label{eqn4}
\mathtt{L}_{\mu,\nu}(x):=i^{-(\mu+1)}\mathtt{S}_{\mu,\nu}(ix)=\frac{x^{\mu+1}}{(\mu-\nu+1)(\mu+\nu+1)}{}_1F_{2}\left(1; \frac{\mu-\nu+3}{2}; \frac{\mu+\nu+3}{2}; \frac{x^2}{4}\right).
\end{align}
The  function $\mathtt{L}_{\mu,\nu}$ is known as the modified Lommel function. We also consider the normalized modified Lommel functions as
\begin{align}\label{modi-lommel}
\lambda_{\mu, \nu}(x)= (\mu-\nu+1)(\mu+\nu+1) x^{-\mu-1} \mathtt{L}_{\mu,\nu}(x)
=\sum_{n=0}^\infty \frac{x^{2n}}{\left(\frac{\mu-\nu+3}{2}\right)_n \left(\frac{\mu+\nu+3}{2}\right)_n 4^n}.
\end{align}
More details about the modified Lommel functions can be seen in
\cite{Rollinger,W,ZS}

The Section $\ref{sec-2}$ in this article is devoted for the investigation of the monotonicity properties of $\lambda_{\mu, \nu}$.  Log-concavity and log-convexity properties in terms of the parameters $\mu$ and variable $x$ are also investigated.
As a consequence, direct and reverse Tur\'an-type inequalities are obtained. The ratio of the derivatives of $\lambda_{\mu, \nu}$ with $\sinh$ and $\cosh$ also considered in this section.

In Section $\ref{sec-3}$, the  special case for the Lommel and the modified Lommel functions related to $\varphi_k$ are considered.  This section investigate the monotonicity and log-convexity for the product and the ratio of the Lommel and the modified Lommel  functions. At the end a Redheffer-type inequality for both the Lommel and the modified Lommel  functions is derived.

Following lemma is required in sequel.
\begin{lemma}\label{lemma:1}\cite{Biernacki-Krzy}
Suppose $f(x)=\sum_{k=0}^\infty a_k x^k$ and $g(x)=\sum_{k=0}^\infty b_k x^k$, where $a_k \in \mathbb{R}$ and $b_k > 0$ for all $k$. Further suppose that both series converge on $|x|<r$. If the sequence $\{a_k/b_k\}_{k\geq 0}$ is increasing (or decreasing), then the function $x \mapsto f(x)/g(x)$ is also increasing (or decreasing) on $(0,r)$.
\end{lemma}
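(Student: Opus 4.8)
The plan is to reduce the monotonicity of the ratio $h(x):=f(x)/g(x)$ to a sign condition on the coefficients of the Wronskian-type quantity $W(x):=f'(x)g(x)-f(x)g'(x)$. First I would note that on $(0,r)$ the denominator is strictly positive: since every $b_k>0$ and $x^k>0$ there, $g(x)=\sum_{k\ge 0}b_k x^k>0$, so $g$ never vanishes and $h$ is differentiable with $h'(x)=W(x)/g(x)^2$. Hence the sign of $h'$ agrees with the sign of $W$, and it suffices to prove $W(x)\ge 0$ on $(0,r)$ in the increasing case (and $W(x)\le 0$ in the decreasing case).

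Next I would expand $W$ as a power series via the Cauchy product, which is legitimate together with term-by-term differentiation inside the common radius of convergence $r$. Collecting powers gives $f'(x)g(x)=\sum_n\big(\sum_{j=0}^n j\,a_j b_{n-j}\big)x^{n-1}$ and $f(x)g'(x)=\sum_n\big(\sum_{j=0}^n (n-j)\,a_j b_{n-j}\big)x^{n-1}$, so the coefficient of $x^{n-1}$ in $W$ is $c_n:=\sum_{j=0}^n (2j-n)\,a_j b_{n-j}$, and $W(x)=\sum_{n\ge 1}c_n x^{n-1}$ on $(0,r)$.

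The key step is to show each $c_n\ge 0$. Writing $t_j:=a_j/b_j$ (permissible since $b_j>0$) gives $c_n=\sum_{j=0}^n (2j-n)\,t_j\,b_j b_{n-j}$, a sum whose summand is sign-indefinite. I would symmetrize by pairing index $j$ with index $n-j$: the two contributions combine into $(2j-n)\,b_j b_{n-j}\,(t_j-t_{n-j})$, and the central term $j=n/2$ (present only for even $n$) vanishes. For $j<n/2$ one has $2j-n<0$, while the hypothesis that $\{t_j\}$ is increasing together with $j<n-j$ forces $t_j-t_{n-j}\le 0$; thus each paired contribution is a product of two non-positive factors times $b_j b_{n-j}>0$, hence non-negative. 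Summing over $0\le j<n/2$ yields $c_n\ge 0$, so $W\ge 0$ and $h$ is increasing on $(0,r)$. Reversing every inequality handles the decreasing case.

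The only real obstacle is the bookkeeping of the third step: recognizing the symmetrizing substitution $j\leftrightarrow n-j$ that converts the sign-indefinite coefficient $c_n$ into a manifestly non-negative sum. Once this pairing is in place the argument is routine, and the supporting facts, namely positivity of $g$ and validity of term-by-term differentiation and multiplication inside $|x|<r$, are standard and present no difficulty.
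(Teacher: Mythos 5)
Your proof is correct: positivity of $g$, the reduction to the sign of the Wronskian $W=f'g-fg'$, the Cauchy-product expansion, and the symmetrization $j\leftrightarrow n-j$ giving $c_n=\sum_{0\le j<n/2}(2j-n)b_jb_{n-j}(t_j-t_{n-j})\ge 0$ are all sound. Note that the paper itself gives no proof of this lemma---it is quoted directly from Biernacki and Krzy\.z---so there is no in-paper argument to compare against; your Wronskian-plus-pairing argument is precisely the standard proof of that classical result, and it correctly yields the (non-strict) monotonicity asserted in the statement.
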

Evidently, the above lemma also holds true when both $f$ and  $g$ are even, or both are odd functions.
\section{Monotonicity pattern}\label{sec-2}
\begin{theorem}\label{thm-1} Suppose that $\mu, \mu_1>-1$ and $\nu, \nu_1 \in \mathbb{R}$ such that $\mu\pm \nu$ and $\mu_1\pm \nu_1$ are not negative odd integer. Then the following assertion are true.
\begin{enumerate}
\item[(i)] Suppose that $\mu_1 \geq \mu>-1$ and
$ (\mu_1-\mu)(\mu_1+\mu+6) \geq \nu_1^2-\nu^2.$
Then, the function $x \mapsto  \lambda_{\mu, \nu}(x)/\lambda_{\mu_1, \nu_1}(x)$ is increase on $(0, \infty)$.
\item[(ii)] If $\mu\pm \nu+3 >0$, then the function  $\mu \mapsto \lambda_{\mu, \nu}(x)$ is decreasing and log-convex on $(-1, \infty)$ for each fixed $\nu \in \mathbb{R}$ and $x >0$.  
\item[(iii)] If $\mu\pm \nu+3 >0$, then the function  $\nu \mapsto \lambda_{\mu, \nu}(x)$ is log-convex on $\mathbb{R}$ for each fixed $\mu>-1$ and $x >0$.
\item[(iv)] The function $ x \mapsto \lambda^{2k}_{\mu, \nu}(x)/\cosh(x)$ is  strictly decreasing if $(\mu-\nu+3)(\mu+\nu+3)>2$.
\item[(v)] The function $ x \mapsto \lambda^{2k+1}_{\mu, \nu}(x)/\sinh(x)$ is  strictly decreasing provided  $(\mu-\nu+5)(\mu+\nu+5)>12$.
\end{enumerate}
\end{theorem}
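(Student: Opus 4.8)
The plan is to route all five parts through the single power representation
\begin{align}
\lambda_{\mu,\nu}(x)=\sum_{n=0}^\infty c_n\,x^{2n},\qquad c_n=c_n(\mu,\nu)=\frac{1}{(a)_n(b)_n\,4^n},\quad a=\tfrac{\mu-\nu+3}{2},\ b=\tfrac{\mu+\nu+3}{2},
\end{align}
together with Lemma \ref{lemma:1}, using repeatedly the two identities $a+b=\mu+3$ and $4ab=(\mu-\nu+3)(\mu+\nu+3)$. Every hypothesis guarantees $a,b>0$, so each $c_n>0$ and the quotient lemma applies. For (i), both $\lambda_{\mu,\nu}$ and $\lambda_{\mu_1,\nu_1}$ are even series with positive coefficients, so by Lemma \ref{lemma:1} the quotient is increasing once the sequence $t_n=(a_1)_n(b_1)_n/[(a)_n(b)_n]$ is increasing, where $a_1=\tfrac{\mu_1-\nu_1+3}{2}$, $b_1=\tfrac{\mu_1+\nu_1+3}{2}$. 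The condition $t_{n+1}/t_n\ge1$ is $(a_1+n)(b_1+n)\ge(a+n)(b+n)$, which via the two identities reduces to the linear inequality
\begin{align}
(\mu_1-\mu)\,n+\tfrac14\bigl[(\mu_1-\mu)(\mu_1+\mu+6)-(\nu_1^2-\nu^2)\bigr]\ge0,
\end{align}
whose coefficient of $n$ and constant term are nonnegative exactly under the two stated hypotheses.

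Parts (ii)--(iii) are parameter statements and I would argue termwise. Monotone decrease in $\mu$ is immediate: increasing $\mu$ raises both $a$ and $b$, hence raises $(a)_n(b)_n$, so each $c_n$ with $n\ge1$ decreases. For log-convexity I would show each term is log-convex in the parameter and then invoke closure of log-convexity under summation. Writing $\log c_n=\log\Gamma(a)-\log\Gamma(a+n)+\log\Gamma(b)-\log\Gamma(b+n)-n\log4$ and differentiating twice in $\mu$ (with $a'=b'=\tfrac12$) gives $\tfrac14[\psi'(a)-\psi'(a+n)]+\tfrac14[\psi'(b)-\psi'(b+n)]\ge0$, since the trigamma $\psi'$ is strictly decreasing on $(0,\infty)$; this yields (ii). In $\nu$ one has $a'=-\tfrac12$, $b'=\tfrac12$, so the second $\nu$-derivative of $\log c_n$ is again $\tfrac14[\psi'(a)-\psi'(a+n)+\psi'(b)-\psi'(b+n)]\ge0$, giving (iii) (and no monotonicity, as raising $\nu$ raises $b$ but lowers $a$).

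The crucial observation for (iv)--(v) is that $\lambda^{2k}_{\mu,\nu}$ and $\lambda^{2k+1}_{\mu,\nu}$ denote \emph{derivatives}, which are even and odd respectively, matching the parities of $\cosh$ and $\sinh$. Differentiating termwise,
\begin{align}
\lambda^{2k}_{\mu,\nu}(x)=\sum_{m=0}^\infty c_{m+k}\,\frac{(2m+2k)!}{(2m)!}\,x^{2m},
\end{align}
and comparing with $\cosh x=\sum_{m\ge0}x^{2m}/(2m)!$, the even-function form of Lemma \ref{lemma:1} shows the quotient is decreasing iff $R_m=c_{m+k}(2m+2k)!=(2m+2k)!/[(a)_{m+k}(b)_{m+k}4^{m+k}]$ is decreasing. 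Putting $N=m+k$ one computes $R_{m+1}/R_m=(2N+1)(2N+2)/[4(a+N)(b+N)]$, and $R_{m+1}/R_m\le1$ reduces, via the two identities, to
\begin{align}
(4\mu+6)\,N+\bigl[(\mu-\nu+3)(\mu+\nu+3)-2\bigr]\ge0.
\end{align}
Because $\mu>-1$ forces $4\mu+6>0$, the left side increases in $N$, so the binding constraint is the smallest index $N=0$ (attained at $k=0,m=0$), giving the $k$-independent threshold $(\mu-\nu+3)(\mu+\nu+3)>2$, which proves (iv) for every $k$.

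Part (v) is structurally identical with the odd derivative $\lambda^{2k+1}_{\mu,\nu}(x)=\sum_{m\ge0}c_{m+k+1}\tfrac{(2m+2k+2)!}{(2m+1)!}x^{2m+1}$ compared with $\sinh x=\sum_{m\ge0}x^{2m+1}/(2m+1)!$. Setting $M=m+k+1\ge1$, the decrease of the coefficient quotient reduces to the same linear inequality $(4\mu+6)M+[(\mu-\nu+3)(\mu+\nu+3)-2]\ge0$, whose hardest case is now $M=1$ (at $k=0,m=0$); substituting $M=1$ and using $(\mu-\nu+3)(\mu+\nu+3)=(\mu+3)^2-\nu^2$ rearranges exactly to $(\mu-\nu+5)(\mu+\nu+5)>12$, the stated threshold. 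The main point to get right is precisely this reduction to one linear inequality in the running index, together with the observation that its index-coefficient $4\mu+6$ is positive; this is what forces the hardest constraint to the smallest admissible index and thereby makes both thresholds independent of $k$, pinning them to the $k=0$ value.
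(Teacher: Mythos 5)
Your proof is correct and follows essentially the same route as the paper: parts (i), (iv), (v) via the coefficient-ratio monotonicity of Lemma \ref{lemma:1}, and parts (ii)--(iii) via termwise decrease and log-convexity of the reciprocal Pochhammer/Gamma factors using the digamma and trigamma functions, followed by closure of log-convexity under summation. If anything, your treatment of (iv)--(v) is slightly more explicit than the paper's, since you spell out the reduction to a linear inequality in the shifted index and note that positivity of $4\mu+6$ forces the binding case to the smallest index, a step the paper leaves implicit.
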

\begin{proof}
First consider  a sequence $\{w_n\}$ defined by
\[ w_n:= \frac{(a-b)_n(a+b)_n} {(c-d)_n(c+d)_n},\]
where $a, b, c, d$ are real numbers such that $a \pm b $ and $c \pm d$ are not negative integers or zero.

Then a  calculation yield
\begin{align*}
\frac{w_{n+1}}{w_n}= \frac{(a-b+n)(a+b+n)}{(c-d+n)(c+d+n)}\geq 1,
\end{align*}
 provided $a^2-b^2+2an+n^2 \geq c^2-d^2+2cn+n^2$, which is equivalent
 to $$2(a-c)n +a^2-b^2-c^2+d^2 \geq 0.$$ The last inequality holds for all $n \geq 0$ if $a\geq c$ and $a^2-b^2-c^2+d^2 \geq 0$.

Choose $a=(\mu_1+3)/2$, $b=\nu_{1}/2$, $c=(\mu+3)/2$ and $d=\nu/2$. Then, $a\geq c$ is equivalent to $\mu_1 \geq \mu$ and $a^2-b^2-c^2+d^2 \geq 0$ reduces to
 $(\mu_1-\mu)(\mu_1+\mu+6) >\nu_1^2-\nu^2.$ This establish the fact that under the hypothesis in $(i)$ the sequence $\{w_n\}$ is increasing. Since in this case $\{w_n\}$ represent the ratio of the coefficients of  $\lambda_{\mu, \nu}(x)$ and $\lambda_{\mu_1, \nu_1}(x)$, the result in $(i)$ follows, in view of the Lemma $\ref{lemma:1}$.

 Two prove $(ii)$ and $(iii)$, consider the function
\[g_n(\mu, \nu):=\frac{\Gamma\left(\frac{\mu-\nu+3}{2}\right) \Gamma\left(\frac{\mu+\nu+3}{2}\right)}{\Gamma\left(\frac{\mu-\nu+3}{2}+n\right) \Gamma\left(\frac{\mu+\nu+3}{2}+n\right)}.\]

The first and second partial differentiation  of $\log(g_n(\mu, \nu))$ with respect to $\mu$,
\begin{align*}
\frac{\partial}{\partial \mu}\log(g_n(\mu, \nu))&=\frac{1}{2}\left(\Psi\left(\tfrac{\mu-\nu+3}{2}\right)
+\Psi\left(\tfrac{\mu+\nu+3}{2}\right)-\Psi\left(\tfrac{\mu-\nu+3}{2}+n\right)
-\Psi\left(\tfrac{\mu+\nu+3}{2}+n\right)\right),\\
\frac{\partial^2}{\partial \mu^2}\log(g_n(\mu, \nu))&=\frac{1}{4}\left(\Psi'\left(\tfrac{\mu-\nu+3}{2}\right)
+\Psi'\left(\tfrac{\mu+\nu+3}{2}\right)-\Psi'\left(\tfrac{\mu-\nu+3}{2}+n\right)
-\Psi'\left(\tfrac{\mu+\nu+3}{2}+n\right)\right)\\&=\frac{\partial^2}{\partial \nu^2}\log(g_n(\mu, \nu)).
\end{align*}
Here, $\Psi(x)=\Gamma'(x)/\Gamma(x)$ is the digamma function which is increasing and concave on $(0, \infty)$. Thus
\[\frac{\frac{\partial}{\partial \mu}g_n(\mu, \nu)}{g_n(\mu, \nu)}=\frac{\partial}{\partial \mu}\log(g_n(\mu, \nu))<0 \quad \text{and} \quad
\frac{\partial^2}{\partial \mu^2}\log(g_n(\mu, \nu))=\frac{\partial^2}{\partial \nu^2}\log(g_n(\mu, \nu))\geq 0.\]
This conclude that $\mu \mapsto \lambda_{\mu, \nu}(x)$ is decreasing and log-convex on $(-1, \infty)$. Also, $\nu \mapsto \lambda_{\mu, \nu}(x)$ is  log-convex on $\mathbb{R}$ for each fixed $\mu>-1$ and $x \in \mathbb{R}$.  This prove (ii) and (iii) in view of the fact that the sum of log-convex functions is also log-convex.

A computation yield
\begin{align}
\lambda_{\mu, \nu}^{(2k)}(x)=\sum_{n=0}^\infty \frac{(2n+2k)!}{\left(\frac{\mu-\nu+3}{2}\right)_{n+k} \left(\frac{\mu+\nu+3}{2}\right)_{n+k}
4^{n+k} (2n)!} x^{2n}.
\end{align}
It is well-known that
\[ \cosh(x)=\sum_{n=0}^\infty \frac{x^{2n}}{(2n)!}. \]
In view of Lemma \ref{lemma:1}, it is enough to know the monotonicity of  the sequence $\{\alpha_n\}_{n \geq 0}$ where
\[ \alpha_n=\frac{(2n+2k)!}{\left(\frac{\mu-\nu+3}{2}\right)_n \left(\frac{\mu+\nu+3}{2}\right)_n
4^{n+k} }. \]
Now, for all $n \geq 0$ and $k\geq 0$, the ratio
\[ \frac{\alpha_{n+1}}{\alpha_n}=\frac{(2n+2k+2)(2n+2k+1)}{4\left(\frac{\mu-\nu+3}{2}+n+k
\right) \left(\frac{\mu+\nu+3}{2}+n+k\right)}<1,\]
provided $(\mu-\nu+3)(\mu+\nu+3) > 2$.

Similarly
\begin{align}
\lambda_{\mu, \nu}^{(2k+1)}(x)=\sum_{n=0}^\infty \frac{(2n+2k+2)! x^{2n+1}}{\left(\frac{\mu-\nu+3}{2}\right)_{n+k+1} \left(\frac{\mu+\nu+3}{2}\right)_{n+k+1}
4^{n+k+1} (2n+1)!} \quad \text{and} \quad
\sin(x)=\sum_{n=0}^\infty \frac{x^{2n}}{(2n)!}, \end{align}
together with Lemma \ref{lemma:1} yields that $\lambda_{\mu, \nu}^{(2k+1)}(x)/ \sin(x)$ is decreasing if the sequence $\{\beta_n\}_{n \geq 0}$ where
\[ \beta_n=\frac{(2n+2k+2)!}{\left(\frac{\mu-\nu+3}{2}\right)_{n+k+1} \left(\frac{\mu+\nu+3}{2}\right)_{n+k+1}
4^{n+k+1} (2n+1)!}, \]
is also decreasing. Again for all $n \geq 0$ and $k\geq 0$, the ratio
\[ \frac{\beta_{n+1}}{\beta_n}=\frac{(2n+2k+4)(2n+2k+3)}{4\left(\frac{\mu-\nu+3}{2}+n+k+1
\right) \left(\frac{\mu+\nu+3}{2}+n+k+1\right)}<1,\]
provided $(\mu-\nu+5)(\mu+\nu+5) > 12$. Hence the conclusion.
\end{proof}

From Theorem \ref{thm-1}, we have few interesting consequence. For example, the log-convexity of $\mu \mapsto \lambda_{\mu, \nu}(x)$ means, for any $\alpha \in [0,1]$ and for
$\mu_1, \mu_2 >-1$,
\begin{align}
\lambda_{\alpha\mu_1+(1-\alpha)\mu_2, \nu}(x) \leq \lambda_{\mu_1, \nu}^{\alpha}(x)
\lambda_{\mu_1, \nu}^{1-\alpha}(x).
\end{align}
In particular, if  $\mu_1=\mu+a>-1$ and $\mu_2=\mu-a>-1$ for $\mu, a \in \mathbb{R}$, and $\alpha=1/2$, then the above inequality gives the reverse of the Tur\`an's type inequality for the modified Lommel functions as
 \begin{align*}
\lambda_{\mu, \nu}^2(x) \leq \lambda_{\mu+a, \nu}(x)
\lambda_{\mu-a, \nu}(x).
\end{align*}
Similarly, the log-convexity of $\nu \mapsto \lambda_{\mu, \nu}(x)$ gives
\begin{align*}
\lambda_{\mu, \nu}^2(x) \leq \lambda_{\mu, \nu+a}(x)
\lambda_{\mu, \nu-a}(x).
\end{align*}
%
%

\section{Redheffer type bound}\label{sec-3}
In this section we prove the Redheffer-type inequality for some special kind Lommel and  modified Lommel functions. From \eqref{eqn2} and \eqref{lommel-1} it follows that
\begin{align}\label{lommel-2}
\mathtt{S}_{\mu-1/2,1/2}(x)= \frac{z^{\mu+1/2}}{(\mu)(\mu+1)}\prod_{j=1}^\infty \left(1-\frac{x^2}{\eta_{\mu, 0, n}^2}\right),\end{align}
and this implies
\begin{align}\label{mod-lommel-2}
\mathtt{L}_{\mu-1/2,1/2}(x)=i^{-\mu-1/2}\mathtt{S}_{\mu-1/2,1/2}(iz)= \frac{z^{\mu+1/2}}{(\mu)(\mu+1)}\prod_{j=1}^\infty \left(1+\frac{x^2}{\eta_{\mu, 0, n}^2}\right).\end{align}

From  \eqref{modi-lommel} we have
\begin{align}\label{modi-lommel-2}
\lambda_{\mu-1/2,1/2}(x)= \mu(\mu+1) z^{-\mu-1/2} \mathtt{L}_{\mu-1/2,1/2}(x)
=\prod_{j=1}^\infty \left(1+\frac{x^2}{\eta_{\mu, 0, n}^2}\right).
\end{align}
Also consider the normalized Lommel function as
\begin{align}\label{lomel-3}
\Lambda_{\mu-1/2,1/2}(x)= \mu(\mu+1) z^{-\mu-1/2} \mathtt{S}_{\mu-1/2,1/2}(x)
=\prod_{j=1}^\infty \left(1-\frac{x^2}{\eta_{\mu, 0, n}^2}\right).
\end{align}
Applying logarithmic differentiation on \eqref{modi-lommel-2} gives
\begin{align}\label{modi-lommel-4}
\frac{ \lambda'_{\mu-1/2,1/2}(x)}{z \lambda_{\mu-1/2,1/2}(x)}
= \sum_{n=1}^\infty \frac{2}{\eta_{\mu, 0, n}^2+x^2}.
\end{align}
A calculation gives
\begin{align}\label{modi-lommel-3}
\lim_{z \to 0}\frac{\lambda_{\mu, \nu}'(x)}{z\lambda_{\mu, \nu}(x)}=\lim_{z \to 0} \frac{\sum_{n=1}^\infty \frac{2n z^{2n-2}}{\left(\frac{\mu-\nu+3}{2}\right)_n \left(\frac{\mu+\nu+3}{2}\right)_n 4^n}}{\sum_{n=0}^\infty \frac{ z^{2n}}{\left(\frac{\mu-\nu+3}{2}\right)_n \left(\frac{\mu+\nu+3}{2}\right)_n 4^n}}
=\frac{2}{(\mu+3)^2-\nu^2}.
\end{align}
Now \eqref{modi-lommel-4} and \eqref{modi-lommel-3} together give the useful identity
\begin{align}\label{identity-1}
\sum_{n=1}^\infty \frac{1}{\eta_{\mu, 0, n}^2} =\frac{4}{(2\mu+5)^2-1}=\frac{1}{(\mu+2)(\mu+3)}.
\end{align}
For simplicity in sequel,  we will use the notation $\eta_{\mu, n}$ for $\eta_{\mu, 0, n}$, the $n^{th}$ positive zero of $\varphi_0(x)$.

Next we state and proof some results involving the function $\lambda_{\mu-1/2,1/2}$ and $\Lambda_{\mu-1/2,1/2}$. The monotonic properties of l'Hospital' rule as state in the following result are useful in sequel.

 \begin{lemma}\label{lem-L'hopital}
 \cite[Lemma 2.2]{A-V-V} Suppose that $-\infty<a<b<\infty$ and $p, q: [a, b)\mapsto \infty$ are differentiable functions such that $q'(x) \neq 0$ for $x \in (a, b)$. If $p'/q'$ is increasing(decreasing) on $(a, b)$,  then so is
$(p(x)-p(a))/(q(x)-q(a))$\end{lemma}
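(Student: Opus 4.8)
The plan is to reduce the monotonicity of the quotient to a sign computation for its derivative and then to invoke Cauchy's mean value theorem. Write $F(x):=(p(x)-p(a))/(q(x)-q(a))$. As a preliminary step I would observe that the hypothesis $q'(x)\neq 0$ on $(a,b)$ forces $q'$ to keep a constant sign there: by Darboux's intermediate value property for derivatives, a nonvanishing derivative cannot change sign. Hence $q$ is strictly monotone on $[a,b)$, so $q(x)-q(a)$ is nonzero and carries the same sign as $q'$ for every $x\in(a,b)$. In particular $F$ is well defined and differentiable on $(a,b)$, which is what lets the argument get started.

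Next I would differentiate directly. The quotient rule gives
\[
F'(x)=\frac{p'(x)\,(q(x)-q(a))-(p(x)-p(a))\,q'(x)}{(q(x)-q(a))^2}.
\]
Multiplying the numerator by the quantity $1/\bigl(q'(x)(q(x)-q(a))\bigr)$, which is strictly positive because $q'(x)$ and $q(x)-q(a)$ share the same sign, shows that the sign of $F'(x)$ coincides with the sign of
\[
\frac{p'(x)}{q'(x)}-\frac{p(x)-p(a)}{q(x)-q(a)}.
\]
Thus it suffices to compare the value of $p'/q'$ at $x$ with the difference quotient of the pair $(p,q)$ over $[a,x]$.

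The final step is Cauchy's mean value theorem: for each fixed $x\in(a,b)$ there exists $\xi\in(a,x)$ with $(p(x)-p(a))/(q(x)-q(a))=p'(\xi)/q'(\xi)$. If $p'/q'$ is increasing on $(a,b)$, then since $\xi<x$ we obtain $p'(\xi)/q'(\xi)\le p'(x)/q'(x)$, so the displayed difference is nonnegative and $F'(x)\ge 0$; therefore $F$ is increasing. The decreasing case follows verbatim with all inequalities reversed.

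I expect the only delicate points to be the sign bookkeeping and the fact that $p,q$ are assumed merely differentiable rather than continuously differentiable. This is precisely why the proof should route through Cauchy's mean value theorem instead of integrating an expression for $F'$. The main obstacle is therefore the preliminary claim that $q(x)-q(a)$ never vanishes and always matches the sign of $q'$, since the entire sign reduction in the middle step rests on it; once that is secured via Darboux's theorem and strict monotonicity of $q$, the rest is routine.
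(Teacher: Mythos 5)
Your proof is correct. Note that the paper does not prove this lemma at all --- it is quoted verbatim from the reference [A-V-V, Lemma 2.2] (Anderson--Vamanamurthy--Vuorinen), so there is no internal proof to compare against; your argument is the standard one for the monotone form of l'H\^opital's rule and matches the approach of that source. All the delicate points are handled properly: Darboux's theorem gives the constant sign of $q'$, hence strict monotonicity of $q$ and the sign agreement of $q(x)-q(a)$ with $q'$; the quotient-rule sign reduction is valid; and routing the comparison through Cauchy's mean value theorem correctly avoids any need for $p'$ and $q'$ to be continuous or differentiable.
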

Next we will state and proof our main result in this section.
\begin{theorem} Suppose that $\mu >-1$ and $I_{\mu}:=(-\eta_{\mu, 1}, \eta_{\mu, 1})$.
\begin{enumerate}
\item The function $x \mapsto \lambda_{\mu-1/2,1/2}(x)$ is increasing on $(0, \infty)$.
\item The function $x \mapsto \lambda_{\mu-1/2,1/2}(x)$ is strictly log-convex on $I_{\mu}$ and strictly geometrically convex on $(0, \infty)$.
\item  The modified Lommel functions  $\lambda_{\mu-1/2,1/2}(x)$ satisfies the sharp exponential Redheffer-type inequality
\begin{align}\label{redheff-ineq}
\left(\frac{\eta_{\mu,1}^2+x^2}{\eta_{\mu,1}^2-x^2} \right)^{a_{\mu}}
\leq \lambda_{\mu-1/2,1/2}(x)\leq \left(\frac{\eta_{\mu,1}^2+x^2}{\eta_{\mu,1}^2-x^2} \right)^{b_{\mu}},
\end{align}
on $I_{\mu}$. Here, $a_{\mu}=0$ and $b_{\mu}=\frac{2 \eta_{\mu, 1}^2}{(\mu+2) (\mu+3)}$ are the best possible constant.

\item The function $x \mapsto \lambda_{\mu-1/2,1/2}(x) \Lambda_{\mu-1/2,1/2}(x)$ is increasing on $(-\eta_{\mu,1}, 0]$ and decreasing on $[0, \eta_{\mu,1})$

\item The function
\[ x \mapsto \frac{\lambda_{\mu-1/2,1/2}(x)} {\Lambda_{\mu-1/2,1/2}(x)}=\frac{L_{\mu-1/2,1/2}(x)}{S_{\mu-1/2,1/2}(x)},\]
is strictly log-convex on $I_{\mu}$.

\item  The  Lommel functions  $\Lambda_{\mu-1/2,1/2}(x)$ satisfies the sharp exponential Redheffer-type inequality
\begin{align}\label{redheff-ineq}
\left(\frac{\eta_{\mu,1}^2-x^2}{\eta_{\mu,1}^2} \right)^{a_{\mu}}
\leq \Lambda_{\mu-1/2,1/2}(x)\leq \left(\frac{\eta_{\mu,1}^2-x^2}{\eta_{\mu,1}^2}\right)^{b_{\mu}},
\end{align}
on $I_{\mu}$. Here, $a_{\mu}=0$ and $b_{\mu}=\frac{2 \eta_{\mu, 1}^2}{(\mu+2) (\mu+3)}$ are the best possible constant.

\end{enumerate}
\end{theorem}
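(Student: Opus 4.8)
The plan is to read every assertion off the two Hadamard product representations \eqref{modi-lommel-2} and \eqref{lomel-3}, together with the logarithmic-derivative identity \eqref{modi-lommel-4} and the Rayleigh-type sum \eqref{identity-1}. Writing $\lambda_{\mu-1/2,1/2}(x)=\prod_{n\ge 1}(1+x^2/\eta_{\mu,n}^2)$ and $\Lambda_{\mu-1/2,1/2}(x)=\prod_{n\ge 1}(1-x^2/\eta_{\mu,n}^2)$, each factor is elementary, the products converge absolutely, and on $I_\mu=(-\eta_{\mu,1},\eta_{\mu,1})$ one has $0<\eta_{\mu,1}\le\eta_{\mu,n}$, so $\eta_{\mu,n}^2-x^2>0$ throughout. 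Term-by-term logarithmic differentiation is therefore legitimate, and most of the statements reduce to a sign check on a convergent series of elementary terms.

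For (1) I would note from \eqref{modi-lommel-4} that $\lambda'_{\mu-1/2,1/2}(x)=x\,\lambda_{\mu-1/2,1/2}(x)\sum_{n\ge1}2/(\eta_{\mu,n}^2+x^2)>0$ for $x>0$. For (2), log-convexity on $I_\mu$ follows from $(\log\lambda_{\mu-1/2,1/2})''=\sum_{n\ge1}2(\eta_{\mu,n}^2-x^2)/(\eta_{\mu,n}^2+x^2)^2$, every summand being positive precisely because $|x|<\eta_{\mu,1}$; geometric convexity on $(0,\infty)$ follows because $x\,\lambda'_{\mu-1/2,1/2}/\lambda_{\mu-1/2,1/2}=\sum_{n\ge1}2x^2/(\eta_{\mu,n}^2+x^2)$ is manifestly increasing in $x$. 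Part (4) is cleanest after multiplying the two products, since $\lambda_{\mu-1/2,1/2}(x)\Lambda_{\mu-1/2,1/2}(x)=\prod_{n\ge1}(1-x^4/\eta_{\mu,n}^4)$ is an even function whose logarithmic derivative is negative for $x>0$ on $I_\mu$, giving the stated increase/decrease about the origin. Part (5) I would deduce from (2): since $\log(\lambda_{\mu-1/2,1/2}/\Lambda_{\mu-1/2,1/2})=\log\lambda_{\mu-1/2,1/2}-\log\Lambda_{\mu-1/2,1/2}$, and a second-derivative computation shows $\Lambda_{\mu-1/2,1/2}$ is log-concave on $I_\mu$ while $\lambda_{\mu-1/2,1/2}$ is log-convex there, the difference is convex.

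The substantive part is the pair of sharp Redheffer inequalities (3) and (6), and here I would use the monotone form of l'Hospital's rule (Lemma \ref{lem-L'hopital}). For (3) set $p(x)=\log\lambda_{\mu-1/2,1/2}(x)$ and $q(x)=\log\big((\eta_{\mu,1}^2+x^2)/(\eta_{\mu,1}^2-x^2)\big)$, both vanishing at $x=0$. A short computation gives $p'(x)/q'(x)=(\eta_{\mu,1}^4-x^4)\big(\sum_{n\ge1}(\eta_{\mu,n}^2+x^2)^{-1}\big)/(2\eta_{\mu,1}^2)$, a product of two positive decreasing functions of $x$ on $(0,\eta_{\mu,1})$, hence decreasing; Lemma \ref{lem-L'hopital} then forces $p/q$ to be decreasing, so $\log\lambda_{\mu-1/2,1/2}/\log(\cdots)$ stays between its endpoint limits. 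The limit at the right endpoint is $0$ (numerator finite, denominator $\to+\infty$), while the limit at $0$ is evaluated by \eqref{identity-1}, which pins down the sharp exponent. Part (6) is handled identically with $p(x)=\log\Lambda_{\mu-1/2,1/2}(x)$ and the one-factor comparison $q(x)=\log\big((\eta_{\mu,1}^2-x^2)/\eta_{\mu,1}^2\big)$; isolating the $n=1$ term shows $p'/q'$ is again monotone, with one endpoint limit equal to $1$ and the other given by \eqref{identity-1}.

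The main obstacle is the Redheffer step. One must (a) verify the monotonicity of $p'/q'$ rather than merely of $p/q$, which is what makes Lemma \ref{lem-L'hopital} applicable, and (b) compute the two endpoint limits correctly, since these are exactly the best constants. The first is where the ordering $\eta_{\mu,1}\le\eta_{\mu,n}$ and the positivity of each series term on $I_\mu$ are essential, and the second is where the Rayleigh-type identity \eqref{identity-1} does the real work in turning the $x\to0$ limit into a closed form in $\mu$. I would double-check the normalization of these endpoint limits with care, as the sharp exponents $a_\mu,b_\mu$ depend on the precise leading coefficients of $p$ and $q$ near $x=0$, and the direction of the bounds in (6) must be matched to the fact that the comparison base there lies below $1$.
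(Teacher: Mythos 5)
Your proposal is correct and, for parts (1)--(5) together with the modified-Lommel inequality (3), it essentially reproduces the paper's argument: logarithmic differentiation of the two Hadamard products, termwise sign checks, and the monotone l'Hospital rule applied to $p(x)=\log\lambda_{\mu-1/2,1/2}(x)$ and $q(x)=\log\bigl((\eta_{\mu,1}^2+x^2)/(\eta_{\mu,1}^2-x^2)\bigr)$. Two of your shortcuts are genuine improvements in presentation: in (3) you get the monotonicity of $p'/q'$ by observing it is a product of two positive decreasing factors, where the paper differentiates it explicitly and checks a sign, and in (5) your ``$\Lambda$ log-concave plus $\lambda$ log-convex'' is the same fact the paper phrases as log-convexity of $\Lambda^{-1}$ times log-convexity of $\lambda$. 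The genuinely different route is part (6): the paper introduces the Rayleigh-type sums $\alpha^{(2m)}_{n,\mu}=\sum_{n\ge1}\eta_{\mu,n}^{-2m}$, expands $x\Lambda'_{\mu-1/2,1/2}/\Lambda_{\mu-1/2,1/2}$ as a power series, and invokes the Biernacki--Krzy\.z lemma on the coefficient sequence $d_m=\eta_{\mu,1}^{2m}\alpha^{(2m)}_{n,\mu}$ to show $p'/q'$ decreases; you instead use the same comparison function $q(x)=\log\bigl(1-x^2/\eta_{\mu,1}^2\bigr)$ but write the closed form
\begin{align*}
\frac{p'(x)}{q'(x)}=\sum_{n\ge1}\frac{\eta_{\mu,1}^2-x^2}{\eta_{\mu,n}^2-x^2}
=1+\sum_{n\ge2}\frac{\eta_{\mu,1}^2-x^2}{\eta_{\mu,n}^2-x^2},
\end{align*}
each summand decreasing in $x^2$ because $\eta_{\mu,1}\le\eta_{\mu,n}$. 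This bypasses the power-series rearrangement and the Rayleigh machinery entirely, at the small cost of a dominated-convergence remark to pass $x\to\eta_{\mu,1}$ inside the sum (a step the paper also leaves implicit). Your endpoint limits --- $1$ at $x\to\eta_{\mu,1}$ and $\eta_{\mu,1}^2\alpha^{(2)}_{n,\mu}=\eta_{\mu,1}^2/((\mu+2)(\mu+3))$ at $x\to0$ via \eqref{identity-1} --- agree with what the paper's proof actually computes, and your closing caution about the base lying below $1$ is exactly the right one: since $0<1-x^2/\eta_{\mu,1}^2\le 1$, the sharp two-sided bound must read $\bigl(1-x^2/\eta_{\mu,1}^2\bigr)^{\eta_{\mu,1}^2/((\mu+2)(\mu+3))}\le\Lambda_{\mu-1/2,1/2}(x)\le 1-x^2/\eta_{\mu,1}^2$, so the constants $a_\mu=0$, $b_\mu=2\eta_{\mu,1}^2/((\mu+2)(\mu+3))$ displayed in the theorem (and likewise the factor-of-$4$ mismatch in part (3)) are inconsistencies internal to the paper, not defects of your argument.
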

\begin{proof} Consider $\mu>-1$ and $ x \in (-\eta_{\mu, 1}, \eta_{\mu, 1})$.
\begin{enumerate}
\item From \eqref{modi-lommel-4} it is evident that
\[ (\log(\lambda_{\mu-1/2,1/2}(x)))'=\frac{ \lambda'_{\mu-1/2,1/2}(x)}{\lambda_{\mu-1/2,1/2}(x)}=
\sum_{n=1}^\infty \frac{2x}{\eta_{\mu,  n}^2+x^2}>0\]
on $(0, \infty)$. Thus, for $\mu>-1$, the function $x \mapsto \log(\lambda_{\mu-1/2,1/2}(x))$ is strictly increasing on $(0, \infty)$ and consequently $x \mapsto
\lambda_{\mu-1/2,1/2}(x)$ is also increasing on $(0, \infty)$.
\item  Again from \eqref{modi-lommel-4} it follows that
\begin{align*}
\left(\frac{ \lambda'_{\mu-1/2,1/2}(x)}{\lambda_{\mu-1/2,1/2}(x)}\right)'
= \sum_{n=1}^\infty \frac{2(\eta_{\mu,  n}^2-x^2)}{(\eta_{\mu,  n}^2+x^2)^2}.
\end{align*}
Clearly, the function $x \mapsto { \lambda'_{\mu-1/2,1/2}(x)}/{\lambda_{\mu-1/2,1/2}(x)}$ is increasing for $ x \in
(-\eta_{\mu, 1}, \eta_{\mu, 1})$. This is equivalent to say that the function
$x \mapsto \lambda_{\mu-1/2,1/2}(x)$ is log-convex on $(-\eta_{\mu, 1}, \eta_{\mu, 1})$.

Another calculation from \eqref{modi-lommel-4} yields
\begin{align*}
\left(\frac{ z\lambda'_{\mu-1/2,1/2}(x)}{\lambda_{\mu-1/2,1/2}(x)}\right)'
= \sum_{n=1}^\infty \frac{4z\eta_{\mu,  n}^2}{(\eta_{\mu,  n}^2+x^2)^2}.
\end{align*}
This implies that the function $x \mapsto { x \lambda'_{\mu-1/2,1/2}(x)}/{\lambda_{\mu-1/2,1/2}(x)}$ is strictly increasing for $ x \in
(0, \infty)$ and hence
$x \mapsto \lambda_{\mu-1/2,1/2}(x)$ is geometrically convex on $(0, \infty)$.

\item  Consider the function
\[ g_{\mu}(x):= \frac{\log(\lambda_{\mu-1/2,1/2}(x))}{\log(\eta_{\mu,1}^2+x^2)-
\log(\eta_{\mu,1}^2-x^2)}.\]

Denote $p(x):=\log(\lambda_{\mu-1/2,1/2}(x))$ and $q(x):=\log(\eta_{\mu,1}^2+x^2)-
\log(\eta_{\mu,1}^2-x^2)$ on $x \in [0, \infty)$. In view of \eqref{modi-lommel-4}, it follows that
 \begin{align*}
  \frac{p'(x)}{q'(x)}=\frac{\eta_{\mu,n}^4-x^4}{4x \eta_{\mu,1}^2}\frac{\lambda_{\mu-1/2,1/2}'(x)}{\lambda_{\mu-1/2,1/2}(x)}
  =\frac{1}{2 \eta_{\mu,1}^2}\sum_{n=1}^\infty \frac{\eta_{\mu,1}^4-x^4}{\eta_{\mu,n}^2+x^2}.
 \end{align*}
and then
\begin{align*}
 \frac{d}{dx} \left(\frac{p'(x)}{q'(x)}\right)
  =-\frac{x}{ \eta_{\mu,1}^2}\sum_{n=1}^\infty \frac{z^4+2x^2\eta_{\mu,1}^2 +\eta_{\mu,n}^4}{(\eta_{\mu,n}^2+x^2)^2} \leq 0.
 \end{align*}
Thus, ${p'(x)}/{q'(x)}$  is decreasing.

Therefore,
\[ g_{\mu}(x) = \frac{p(x)-p(0)}{q(x)-q(0)}=\frac{p(x)}{q(x)}\]
is  decreasing too on $[0, \eta_{\mu,1})$ and hence
\[ a_{\mu} =\lim_{x \to \eta_{\mu,1}} g_{\mu}(x)< g_{\mu}(x)<\lim_{x \to 0} g_{\mu}(x)=b_{\mu}.\]
Finally,
\[ \lim_{x \to \eta_{\mu,1}} \frac{p'(x)}{q'(x)} =0 \quad \text{and} \quad
\lim_{x \to 0} \frac{p'(x)}{q'(x)} =\frac{\eta_{\mu,1}^2}{2 }\sum_{n=1}^\infty \frac{1}{\eta_{\mu,n}^2}=\frac{\eta_{\mu,1}^2}{2(\mu+2)(\mu+3) },\]
implies
$a_{\mu}=0$  \text{and}  $b_{\mu}={\eta_{\mu,1}^2}/(2(\mu+2)(\mu+3)).$\\
\item From \eqref{modi-lommel-3} and \eqref{lomel-3}, it is evident that
\[\lambda_{\mu-1/2,1/2}(x)  \Lambda_{\mu-1/2,1/2}(x)=\prod_{n=1}^\infty \left(1-\frac{x^4}{\eta_{\mu,n}^4}\right)\]
Thus,  by the logarithmic differentiation it follows that
\[\frac{\bigg(\lambda_{\mu-1/2,1/2}(x)  \Lambda_{\mu-1/2,1/2}(x))\bigg)'}{\lambda_{\mu-1/2,1/2}(x)  \Lambda_{\mu-1/2,1/2}(x)} = -\sum_{n=1}^\infty\frac{4 x^3}{\eta_{\mu,n}^4-x^4}. \]
Since $x \in I_\mu$, the conclusion follows.

\item From \eqref{lomel-3}, we have the logarithmic differentiation of $(\Lambda_{\mu-1/2, 1/2}(x))^{-1}$ as
    \[ \left(\log\left((\Lambda_{\mu-1/2, 1/2}(x))^{-1}\right)\right)'=\sum_{n=1}^\infty\frac{2 x}{\eta_{\mu,n}^2-x^2}\]
    \quad \text{and} \quad  \[\left(\log\left((\Lambda_{\mu-1/2, 1/2}(x))^{-1}\right)\right)''=2\sum_{n=1}^\infty\frac{ \eta_{\mu,n}^2+x^2}{(\eta_{\mu,n}^2-x^2)^2} >0. \]
    This conclude that the function $x \mapsto (\Lambda_{\mu-1/2, 1/2}(x))^{-1}$ is strictly log-convex on $I_{\mu}$.
    Finally, being the product of two strictly log-convex functions, the function
     \[ x \mapsto \frac{\lambda_{\mu-1/2,1/2}(x)} {\Lambda_{\mu-1/2,1/2}(x)}=\frac{L_{\mu-1/2,1/2}(x)}{S_{\mu-1/2,1/2}(x)},\]
    is also strictly log-convex. Note that the log-convexity of  $x \mapsto \lambda_{\mu-1/2,1/2}(x)$ follows from
     part $(2)$ of this theorem.
    \item To prove this result first we need to set up a Rayleigh type functions for the Lommel function. Define the function
        \begin{align}
        \mathbb{\alpha}^{(2m)}_{n, \mu}:=\sum_{n=1}^\infty \eta_{\mu, n}^{-2m}, \quad m=1,2, \ldots.
        \end{align}
       Logarithmic differentiation of \eqref{lomel-3} yield
       \begin{align*}
       \frac{x \Lambda'_{\mu-1/2,1/2}(x)}{\Lambda_{\mu-1/2,1/2}(x)}
       =-2\sum_{n=1}^\infty \frac{x^2}{\eta_{\mu, n}^2-x^2}
       =\sum_{n=1}^\infty \frac{x^2}{\eta_{\mu, n}^2}\left(1-\frac{x^2}{\eta_{\mu, n}^2}\right)^{-1}
       =\sum_{n=1}^\infty \frac{x^2}{\eta_{\mu, n}^2}\sum_{m=0}^\infty \frac{x^{2m}}{\eta_{\mu, n}^{2m}}.
       \end{align*}
       Interchanging the order of the summation it follows that
       \begin{align}\label{eqn-23}
       \frac{x \Lambda'_{\mu-1/2,1/2}(x)}{\Lambda_{\mu-1/2,1/2}(x)}
              =-2\sum_{m=0}^\infty \sum_{n=1}^\infty  \frac{x^{2m+2}}{\eta_{\mu, n}^{2m+2}}
              =-2\sum_{m=1}^\infty \alpha_{n,\mu}^{(2m)}  x^{2m}.
       \end{align}
Consider the function
\begin{align}\label{120}
\varphi_{\mu}(x):=\frac{\log(\Lambda_{\mu-1/2,1/2}(x))}{\log\left(1-\frac{x^{2}}
{\eta_{\mu, 1}^{2}}\right)}=\frac{\mathtt{p}_\mu(x)}{\mathtt{q}_{\mu}(x)}.
\end{align}
The binomial series together with  \eqref{eqn-23} gives  the ratio of $\mathtt{p}_\mu'$ and $\mathtt{q}_\mu'$  as
\begin{align}\label{eqn-121}
\frac{\mathtt{p}_\mu'(x)}{\mathtt{q}_\mu'(x)}=\dfrac{\frac{x \Lambda'_{\mu-1/2,1/2}(x)}{\Lambda_{\mu-1/2,1/2}(x)}}{\frac{-2 x^{2}}
{\eta_{\mu, 1}^{2}}\left(1-\frac{x^{2}}
{\eta_{\mu, 1}^{2}}\right)^{-1}}=\displaystyle\frac{\sum_{m=1}^\infty \alpha_{n,\mu}^{(2m)}  x^{2m}}{\sum_{m=1}^\infty \eta_{\mu, 1}^{-2m}  x^{2m}}.
\end{align}
Denote $d_m=\eta_{\mu, 1}^{2m}\alpha_{n,\mu}^{(2m)}$. Then
\begin{align*} {d_{m+1}}-{d_m}&= {\eta_{\mu, 1}^{2m+2}\alpha_{n,\mu}^{(2m+2)}}-{\eta_{\mu, 1}^{2m}\alpha_{n,\mu}^{(2m)}}
= \sum_{n=1}^\infty\frac{\eta_{\mu, 1}^{2m}}{\eta_{\mu, n}^{2m}}\left( \frac{\eta_{\mu, 1}^{2}}{\eta_{\mu, n }^{2}}-1\right)<0.
\end{align*}
This is equivalent to say that the sequence $\{d_m\}$ is decreasing, and hence by Lemma \ref{lemma:1} it follows that the ratio $p_{\mu}'/q_{\mu}'$ is decreasing. In view of Lemma \ref{lem-L'hopital}, we have $\varphi_\mu=p_{\mu}/q_{\mu}$ is decreasing.

From \eqref{120} and \eqref{eqn-121}, it can be shown that
\begin{align*}
\lim_{x \to 0} \varphi_{\mu}(x)&=\lim_{x \to 0} \frac{p_{\mu}'(x)}{q_\mu'(x)}
=\lim_{x \to 0} \frac{p_{\mu}''(x)}{q_\mu''(x)}=\lim_{x \to 0} \frac{p_{\mu}''(x)}{q_\mu''(x)}=\eta_{\mu,1}^2 \alpha_{\mu,n}^{(2)},\\ \notag
\quad{and}\hspace{1in}&\\
\lim_{x \to \eta_{\mu,1}} \varphi_{\mu}(x)&=\lim_{x \to \eta_{\mu,1}} \frac{p_{\mu}'(x)}{q_\mu'(x)}
=\lim_{x \to \eta_{\mu,1}}\sum_{n=1}^\infty \frac{\eta_{\mu,1}^2-x^2}{\eta_{\mu,n}^2-x^2}=1.
\end{align*}
It is easy to seen that $\eta_{\mu,1}^2 \alpha_{\mu,n}^{(2)}=b_{\mu}$. \qedhere
\end{enumerate}
\end{proof}


\begin{thebibliography}{999}
\bibitem{ABRAMOWITZ} M. Abramowitz\ and\ I. A. Stegun, {\it A Handbook of Mathematical Functions},
New York, (1965).

\bibitem{A-V-V} G. D. Anderson, M. K. Vamanamurthy\ and\ M. Vuorinen, Inequalities for quasiconformal mappings in space, Pacific J. Math. {\bf 160} (1993)no.~1.

\bibitem{Andrews-Askey} G. E. Andrews, R. Askey\ and\ R. Roy, {\it Special functions}, Encyclopedia of Mathematics and its Applications, 71, Cambridge Univ. Press, Cambridge, 1999.
\bibitem{AB} A. W. Babister, Transcendental functions satisfying nonhomogeneous linear differential equations, The Macmillan Co., New York(1967).

\bibitem{Baricz-Kumandos}  A. Baricz\ and\ S. Koumandos, Tur\'an-Type Inequalities for Some Lommel Functions of the First Kind, Proc. Edinb. Math. Soc. (2) {\bf 59} (2016)no.~3. MR3572758


\bibitem{Biernacki-Krzy}  M. Biernacki\ and\ J. Krzy\.z, On the monotonity of certain functionals in the theory of analytic functions, Ann. Univ. Mariae Curie-Sk\l odowska. Sect. A. {\bf 9} (1955), 135--147 (1957).

\bibitem{KM}S. Koumandos\ and\ M. Lamprecht, The zeros of certain Lommel functions, Proc. Amer. Math. Soc. {\bf 140} (2012), no.~9, 3091--3100.

\bibitem{L}A. Laforgia\ and\ P. Natalini, Tur\'an-type inequalities for some special functions, JIPAM. J. Inequal. Pure Appl. Math. {\bf 7} (2006), no.~1, Article 22, 3 pp.

\bibitem{Mi} D.S. Mitrinovi\'c, {\em Analytic inequalities}, Springer, New York, 1970.


\bibitem{Olver}  F. W. J. Olver, D. W.  Lozier,  R. F. Boisvert \ and\ C. W. Clark, {\it NIST handbook of mathematical functions}, U.S. Dept. Commerce, Washington, DC, 2010.

\bibitem{Steinig} J. Steinig,  The sign of Lommel's function. Trans. Amer. Math. Soc. 163 (1972), 123--129.
\bibitem{Lommel-1} E.C.J. Lommel, Studien $\ddot{U}$ber die Bessel'schen Functionen, B.G. Teubner, Leipzig, 1868.
\bibitem{Lommel-2} E. Von Lommel, Ueber eine mit den Bessel'schen Functionen verwandte Function, Math. Ann. {\bf 9} (1875)no.~3, 425-444.
\bibitem{CY}    Y.-M. Chiang\ and\ K.-W. Yu, Subnormal solutions of non-homogeneous periodic ODEs, special functions and related polynomials, J. Reine Angew. Math. {\bf 651} (2011).
\bibitem{Goldstein} S. Goldstein, On the vortex theory of screw propellers, Proc. R. Soc. Lond. A 23 (1929), pp. 440–465.

\bibitem{W}G. N. Watson, A Treatise on the Theory of Bessel Functions, Cambridge University Press, Cambridge, England(1944).
\bibitem{Szym} P. Szymanski, On the integral representations of the Lommel functions, Proc. London Math. Soc. s2-40 (1936), pp. 71–82.
\bibitem{Cooke} R. G. Cooke, On the Sign of Lommel's Function, J. London Math. Soc. {\bf S1-7} no.~4.
\bibitem{Glasser} M. L. Glasser, Integral representations for the exceptional univariate Lommel functions, J. Phys. A {\bf 43} (2010)no.~15

\bibitem{Pidduck} F.B. Pidduck, Lommel’s functions of small argument, Quart. J. Math. Oxford Ser. 17 (1946), pp. 193--196.


\bibitem{RM} A. B. Rao, H. L. Manocha,  Expansion formulae for Lommel's functions. Indian J. Pure Appl. Math. 7 (1976), no. 4, 418--420.
\bibitem{Rollinger} C.N. Rollinger, Lommel functions with imaginary argument, Quart. Appl. Math. 21 (1963), pp. 343–349.
\bibitem{ZS}    C. H. Ziener\ and\ H. P. Schlemmer, The inverse Laplace transform of the modified Lommel functions, Integral Transforms Spec. Funct. {\bf 24} (2013), no.~2, 141--155.
\end{thebibliography}
\end{document}